\documentclass[12pt]{article}
\usepackage{amsmath}

%look at all them packages
\usepackage[utf8]{inputenc}
\usepackage[english]{babel}
\usepackage{amsmath, amsfonts, amsthm, amssymb, setspace, enumitem, tikz-cd, bbm, mathtools, graphicx,scalerel, mathrsfs, titling}

% You may or may not want this
%\AtBeginEnvironment{align}{\setcounter{equation}{0}} 

% Operator names

\DeclareMathOperator{\Spec}{Spec}

\DeclareMathOperator{\id}{id}
\DeclareMathOperator{\Gal}{Gal}

\DeclareMathOperator{\Hilb}{Hilb}
\DeclareMathOperator{\Hilbit}{\mathit{Hilb}}

% shortcut commands
\newcommand{\inv}{^{-1}}

\newcommand{\bN}{\mathbb{N}}
\newcommand{\bZ}{\mathbb{Z}}
\newcommand{\bQ}{\mathbb{Q}}
\newcommand{\bR}{\mathbb{R}}
\newcommand{\bC}{\mathbb{C}}
\newcommand{\bP}{\mathbb{P}}

\newcommand{\ra}{\rightarrow}

\newcommand{\catname}[1]{{\normalfont\textbf{#1}}}
\newcommand{\Set}{\catname{Set}}
\newcommand{\Sch}{\catname{Sch}}

%theorem types
\newtheoremstyle{break}
  {\topsep}{\topsep}%
  {\bfseries}{}%
  {\newline}{}%
\theoremstyle{break}
\newtheorem{theorem-break}{Theorem}
\newtheorem{lemma}{Lemma}
\newtheorem{theorem}{Theorem}
\newtheorem{corollary}{Corollary}
\newtheorem{proposition}{Proposition}

%variant theorem types
\theoremstyle{definition}

\newtheorem{example}{Example}
\newenvironment{pflabel}[1]{
	\begin{proof}[#1]
	%\begin{spacing}{1.5}
	}{
	%\end{spacing}
	\end{proof}
	}

% doesn't actually do anything because i've manually set the margin size in line 2. You can change this if it makes the TODO notes look weird.
%\setlength {\marginparwidth }{2cm}
\showboxdepth100
\showboxbreadth100

\title{Abelian varieties over real closed fields}
\author{Nathanial Lowry}
\begin{document}

\begin{titlingpage}
    \maketitle
    \begin{abstract}
    In this paper, we classify the possible group
    structures on the set of $R$-valued points of an
    abelian variety, where $R$ is any real closed field.
    We make use of a family of abelian 
    varieties that, in effect, allows one to quantify  		
    over all abelian varieties of a fixed dimension and 
    degree of polarization in a first-order fashion.
    \end{abstract}
\end{titlingpage}

Frey and Jarden show in \cite{frey-jarden} that for every abelian variety of dimension $g>0$ defined over an algebraically closed field $K$, there is an isomorphism of groups $$A(K)\cong V\times(\bQ/\bZ)^{2g},$$ where $V$ is a $\bQ$-vector space of dimension equal to $|K|$, the cardinality of $K$. We prove an analogous result for real closed fields:

\begin{theorem}
\label{theorem-structure}
Let $R$ be a real closed field and $A/R$ an abelian variety of dimension $g>0$. Then there is an isomorphism of groups $$A(R)\cong V \times (\bQ/\bZ)^g\times (\bZ/2\bZ)^d$$ for some $0\leq g \leq d$, where $V$ is a $\bQ$-vector space of dimension $|R|$, the cardinality of $R$.
\end{theorem}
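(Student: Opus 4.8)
The plan is to isolate a short list of purely group-theoretic facts about $A(R)$, prove each of them for $R=\bR$ by classical Lie theory, and then transfer them to an arbitrary real closed field by the completeness of the theory of real closed fields, using the family of abelian varieties introduced above to phrase each fact as a first-order statement. Concretely, I would aim to establish the following properties of $G:=A(R)$: (a) for every odd prime $p$, multiplication by $p$ is surjective, $pG=G$; (b) $G=2G+G[2]$, i.e.\ $G/2G$ is spanned by the images of the $2$-torsion; and (c) the torsion subgroup satisfies $G_{\mathrm{tors}}\cong(\bQ/\bZ)^g\times(\bZ/2\bZ)^s$ with $0\le s\le g$, which I would extract from the torsion counts $|G[\ell]|=\ell^g$ for odd $\ell$ together with $|G[2^{k+1}]|=2^g|G[2^k]|$ for $k\ge 1$ and $|G[2]|\le 2^{2g}$.

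Granting (a)--(c), the structure theorem follows by pure algebra. From (a) and (b) the subgroup $2G$ is divisible: it is $p$-divisible for odd $p$ because $p\cdot 2G=2\cdot pG=2G$, and it is $2$-divisible because (b) gives $2G=4G$. Moreover any divisible subgroup $D'$ satisfies $D'=2D'\subseteq 2G$, so $D:=2G$ is the maximal divisible subgroup of $G$. Divisible groups are injective, so the extension $0\to D\to G\to G/2G\to 0$ splits and $G\cong D\oplus(G/2G)$. By (b) the quotient $G/2G$ is a quotient of $G[2]$, hence a finite elementary abelian $2$-group $(\bZ/2\bZ)^d$ with $d\le g$ (the bound $|G/2G|\le 2^g$ also being transferred from $\bR$). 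Comparing torsion subgroups and using (c) together with the uniqueness of the maximal divisible subgroup forces $D_{\mathrm{tors}}=(\bQ/\bZ)^g$ and $d=s$; since $D$ is divisible this gives $D\cong V\oplus(\bQ/\bZ)^g$ with $V$ a $\bQ$-vector space, and a cardinality count (using $|A(R)|=|R|$ for $g\ge 1$) shows $\dim_\bQ V=|R|$. Assembling, $A(R)\cong V\times(\bQ/\bZ)^g\times(\bZ/2\bZ)^d$ with $0\le d\le g$.

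For the transfer, each of (a), (b) and the individual torsion counts in (c) is, for a fixed prime and exponent, a first-order sentence in the language of the family $\mathcal A\to M$: quantifying over the parameter $m\in M$ that cuts out a fiber isomorphic to $A$, and over points of the fiber, the conditions ``$\forall x\,\exists y\,(py=x)$'', ``$\forall x\,\exists y\,\exists t\,(2t=0\wedge x=2y+t)$'', and the bounded counting statements ``$\mathcal A_m$ has exactly $\ell^g$ points of order dividing $\ell$'' are all expressible with polynomial (in)equalities. For $R=\bR$ each such sentence holds for every real abelian variety of dimension $g$: $A(\bR)$ is a compact commutative Lie group of dimension $g$, so $A(\bR)\cong(\bR/\bZ)^g\times(\bZ/2\bZ)^s$ with $0\le s\le g$, from which (a), (b), (c) are immediate computations. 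Since $R\equiv\bR$ as real closed fields, every one of these sentences also holds in $R$, giving (a)--(c) there.

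The main obstacle is the first-order setup rather than the algebra: one must produce the family $\mathcal A\to M$ over $\bQ$ (or $\bZ$) so that every abelian variety of dimension $g$ and polarization degree $\delta$ over every real closed field appears as a fiber $\mathcal A_m$, with the group law and the division and torsion predicates uniformly definable across fibers. This is where the polarization-degree bookkeeping enters: fixing $\delta$ bounds a projective embedding and lets one use a Hilbert-scheme-type parameter space on which the group structure is definable, and one then quantifies the argument over the countably many degrees $\delta$. Care is also needed to restrict $M$ to the locus of parameters that genuinely define smooth proper group schemes, and to check that this locus, together with the fiberwise group operation, is itself definable, so that the transfer applies uniformly.
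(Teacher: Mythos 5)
Your transfer machinery and torsion analysis follow essentially the same route as the paper: the paper also constructs a Hilbert-scheme-based family of polarized abelian schemes (Lemma \ref{family}), verifies the torsion counts over $\bR$ via the Gross--Harris classification $A(\bR)\cong(\bR/\bZ)^g\times(\bZ/2\bZ)^s$, and transfers them to $R$ by elementary equivalence (Proposition \ref{torsion-prop}). Your algebraic assembly is a minor variant: you split off the maximal divisible subgroup $D=2G$ using injectivity of divisible groups, whereas the paper first splits off the torsion $T$ via Baer's theorem (divisible-times-finite torsion is a direct summand) and then shows the torsion-free complement is uniquely divisible using the first-order fact that $2A(R)$ is uniquely $n$-divisible. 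Both decompositions are correct, and your statements (a), (b), (c) are genuinely first-order and true over $\bR$, so that part of the argument is sound.

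However, there is a genuine gap at the very last step: the claim that ``a cardinality count (using $|A(R)|=|R|$) shows $\dim_\bQ V=|R|$.'' This fails when $R$ is countable (e.g.\ the field of real algebraic numbers): the torsion subgroup $(\bQ/\bZ)^g\times(\bZ/2\bZ)^d$ is already countably infinite, so $|A(R)|=\aleph_0$ is consistent with $V$ having any finite dimension, including $0$. Worse, this cannot be repaired by your transfer method, because having infinite rank is not a first-order property and rank is not an elementary invariant: $A(\bR)$ has rank $2^{\aleph_0}$ while $A(R)$ is countable for countable $R\equiv\bR$. The paper devotes Proposition \ref{prop-freepart} to exactly this point, and it requires genuinely new input: real closed fields are ample (large) fields, and by the cited theorem on ranks of abelian varieties over ample fields, $A(R)$ has infinite rank --- this settles the countable case --- while for uncountable $R$ an implicit-function-theorem argument produces $|R|$ points of $A(R)$, which also justifies your unproven assertion $|A(R)|=|R|$. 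Without some such non-model-theoretic ingredient, your argument proves the torsion structure and the splitting $A(R)\cong V\oplus(\bQ/\bZ)^g\oplus(\bZ/2\bZ)^d$, but not that $\dim_\bQ V=|R|$.
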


We first prove that $A(R)$ has the desired rank $\dim_{\bQ}A\otimes_{\bZ} \bQ$.

\begin{proposition}
\label{prop-freepart}
Let $A/R$ be an abelian variety of dimension $g>0$ over a real closed field. Then the rank of $A(R)$ is equal to the cardinality of $R$.
\end{proposition}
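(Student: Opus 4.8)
The plan is to sandwich the rank between $|R|$ and $|R|$, handling the upper bound and the uncountable case by a cardinality count and isolating the countable case—where the genuine difficulty lies—for a separate geometric argument. Write $r := \rank A(R)$ and $C := R[\sqrt{-1}]$, which is algebraically closed since $R$ is real closed. First I would record two cardinality facts. Since $g>0$ and $O\in A(R)$ is a smooth $R$-point, the semialgebraic implicit function theorem produces a one-dimensional arc through $O$, so $|A(R)|\geq |R|$; as $A(R)\subseteq A(C)$ with $|A(C)|=|C|=|R|$, we get $|A(R)|=|R|$. Second, because $\mathrm{char}\,R=0$ we have $A(C)_{\mathrm{tors}}\cong (\bQ/\bZ)^{2g}$, so the torsion subgroup $A(R)_{\mathrm{tors}}\subseteq A(C)_{\mathrm{tors}}$ is countable. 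The upper bound is then immediate: $r\leq |A(R)|=|R|$.

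For the lower bound when $R$ is uncountable, I would argue purely by cardinality. Writing $F=A(R)/A(R)_{\mathrm{tors}}$, the group $F$ is torsion-free with $|F|=|R|$ (the torsion being only countable), and $F$ embeds in $F\otimes_{\bZ}\bQ\cong \bQ^{(r)}$, whose cardinality is $\max(r,\aleph_0)$. Since $|R|>\aleph_0$ this forces $r=|R|$.

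The crux is the countable case, where cardinality is powerless: a torsion-free group such as $\bQ$ is countable yet has rank $1$, so I must exhibit infinitely many $\bZ$-independent points. Here I would pass to geometry. Fixing a very ample line bundle on $A$ defined over $R$ of sufficiently high degree, let $X=A\cap H_1\cap\cdots\cap H_{g-1}$ be the intersection with general $R$-rational hyperplanes through $O$; by Bertini (valid since $R$ is infinite) $X$ is a smooth projective curve, and for the degree large its geometric genus is $\geq 2$. Since $O\in X(R)$ is a smooth point, $X(R)$ contains a semialgebraic arc and is therefore infinite. Suppose for contradiction that $r<\infty$; then $\Gamma:=A(R)$ is a subgroup of $A(C)$ of finite rank. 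The Mordell--Lang theorem for finite-rank subgroups (Faltings--Vojta--McQuillan), applied over the algebraically closed field $C$ of characteristic $0$ to the curve $X\subseteq A$, asserts that $X(C)\cap\Gamma$ is a finite union of translates of abelian subvarieties contained in $X$; as $X$ has genus $\geq 2$ it contains no positive-dimensional translated abelian subvariety, so $X(C)\cap\Gamma$ is finite. But $X$ is defined over $R$, whence $X(C)\cap A(R)=X(R)$, which is infinite—a contradiction. Hence $r$ is infinite, i.e. $r=\aleph_0=|R|$, completing the countable case.

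The main obstacle is precisely this lower bound in the countable case: unlike over $\bR$ (or any uncountable $R$) one cannot read the rank off the cardinality, and the rank is invisible to congruence or divisibility data because the relevant part of $A(R)$ becomes divisible after tensoring with $\bQ$. My proposed route through a high-genus curve section converts ``infinitely many independent points'' into the finiteness conclusion of Mordell--Lang. The points needing care are the verification that Mordell--Lang for finite-rank subgroups is available over an arbitrary algebraically closed field of characteristic $0$ (rather than only over $\overline{\bQ}$ or $\bC$), which follows by descending the finite set of intersection points and the finitely many rational generators of $\Gamma\otimes\bQ$ to a finitely generated subfield, together with the standard Bertini and genus bookkeeping guaranteeing a smooth genus-$\geq 2$ section carrying an $R$-point.
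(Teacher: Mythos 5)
Your upper bound and your uncountable case are essentially the paper's own argument (the semialgebraic implicit function theorem to get $|A(R)|=|R|$, then a cardinality count after quotienting by the countable torsion), but your treatment of the countable case is genuinely different: where the paper simply cites \cite[Theorem 1.1]{ample} --- abelian varieties of positive dimension over ample fields have infinite rank --- you attempt a self-contained proof via Mordell--Lang for finite-rank subgroups. That substitution is a legitimate idea, and your flagged concern about transferring Mordell--Lang from $\bC$ to an arbitrary algebraically closed field of characteristic $0$ is indeed handled by your descent-to-a-small-subfield sketch (any countable algebraically closed subfield containing the relevant data embeds into $\bC$, and the finite-rank/division-hull hypothesis of McQuillan's theorem is preserved). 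However, as written the argument has a genuine gap: it fails precisely when $g=1$. Your curve is $X=A\cap H_1\cap\dots\cap H_{g-1}$, so for an elliptic curve $X=A$ itself, which has genus $1$ no matter how positive the chosen line bundle is --- the clause ``for the degree large its geometric genus is $\geq 2$'' is false here, since a one-dimensional abelian variety contains no curve of genus $\geq 2$ whatsoever. And for $X$ of genus $1$, Mordell--Lang yields no contradiction: $X$ is itself a translate of an abelian subvariety, so $X(C)\cap\Gamma$ being infinite is perfectly consistent with its conclusion. Thus the countable case is unproved exactly for elliptic curves, which the proposition must cover.

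The gap is repairable within your framework: if $\rank A(R)=r<\infty$, then $(A\times A)(R)=A(R)\times A(R)$ has rank $2r<\infty$, and $A\times A$ has dimension $2g\geq 2$, so your Bertini construction applied to $A\times A$ produces a smooth curve $X$ of genus $\geq 2$ through the origin, defined over $R$, whose real locus contains an arc and hence is infinite; Mordell--Lang applied to $X\subset A\times A$ then gives the desired contradiction, and finiteness of the rank of $A(R)$ is refuted. (Note also that for $g\geq 2$ no ``large degree'' hypothesis is needed: since $\omega_A$ is trivial, adjunction gives $\omega_X\cong\mathcal{O}((g-1)H)|_X$, which is ample, forcing genus $\geq 2$.) With that repair your proof is correct, though it rests on Faltings--Vojta--McQuillan, a far deeper input than the ample-field rank theorem the paper invokes for the same step.
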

\begin{proof}
It is known that real closed fields are ``ample" (or ``large" as in Pop \cite{pop}) in the sense that for every smooth curve $C/R$, either $C(R)=\emptyset$ or $C(R)$ is infinite. Now \cite[Theorem 1.1]{ample} demonstrates that $A(R)$ has infinite rank.If $R$ countable, then so too is $A(R)$ and the result follows.\newline \indent If $R$ is uncountable, then one may employ the implicit function theorem for real closed fields (as in \cite{real}, Corollary 2.9.8) to find at least $|R|$ points in $A(R)$. Indeed, consider an affine open $U=\Spec(R[x_1,\dots, x_n]/(f_1,\dots, f_m))$ around the identity $e\in A(R)$. As $A$ is smooth, it is a local complete intersection, so we may assume that $n-m=g$. Consider the map $R^n\ra R^m$ given by the $f_i$. Again since $A$ is smooth, the Jacobian matrix $(\partial f_i / \partial x_j)$ has maximal rank $m$ at $e$ and thus there is some $m\times m$ minor of this matrix which is non-vanishing. After relabeling coordinates we may assume that $e$ belongs to the open set where $\det( (\partial f_i / \partial x_j)_{g+1\leq j\leq n})\neq 0$. Now the implicit function theorem guarantees the existence of a non-empty open subset $C\subset R^g$ and a definable function $h:C\ra R^m$ so that $(a_1,\dots, a_g, h(a_1,\dots, a_g))\in A(R)$ for every $(a_1,\dots, a_g)\in C$. As non-empty open subsets in $R^g$ have cardinality $|R|$, this demonstrates at least $|R|$ points of $A(R)$. As the cardinality of $A(R)$ is certainly at most that of $R$, we have $|A(R)|=|R|$. \newline \indent The torsion subgroup $T\subset A(R)$ is at most countable, so $|A(R)/T|=|R|$ since we assume $R$ to be uncountable. Finally, $A(R)/T$ is isomorphic to a subgroup of $A(R)\otimes \bQ$, so $|A(R)\otimes \bQ|=|R|$ as well.
One easily shows that for any infinite cardinal $\kappa$, the $\bQ$-vector space $\oplus_{i\in\kappa} \bQ$ has cardinality $\kappa$, so the result follows.
\end{proof}

\begin{proposition}
\label{torsion-prop}
Let $R$ be a real closed field, $A/R$ an abelian variety of dimension $g$, $n$ a positive integer, and $p$ a prime. For $p\neq 2$, there are exactly $p^{ng}$ points of order dividing $p^n$ on $A$ which are defined over $R$, i.e. $|A[p^n](R)|=p^{ng}$. For $p=2$, $|A[2^n](R)|=2^{ng+d}$ for some $0\leq d\leq g$ which is independent of $n$.
\end{proposition}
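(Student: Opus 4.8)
The plan is to pass to the algebraic closure $C = R[i]$ and count the points of $A[p^n]$ fixed by complex conjugation. Since $C$ is algebraically closed of characteristic zero, $\Gal(C/R) = \langle \sigma \rangle$ is cyclic of order two with $\sigma$ the conjugation, and $A[p^n](R) = A[p^n](C)^{\sigma}$. I would phrase everything in terms of the $p$-adic Tate module $T := T_p A \cong \bZ_p^{2g}$, on which $\sigma$ acts as an involution and $A[p^n](C) = T/p^n T$. The count we want is thus $|(T/p^n T)^{\sigma}|$.

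The key input is that both eigenspaces of $\sigma$ on $V := T \otimes_{\bZ_p} \bQ_p$ have dimension exactly $g$. To see this, fix a polarization $\lambda : A \to A^\vee$ defined over $R$ (it exists since $A$ is projective), and form the alternating Weil pairing $\langle x, y\rangle = e_{p^n}(x, \lambda y)$, which on passing to Tate modules gives a non-degenerate symplectic form $V \times V \to \bQ_p(1)$. Because the only roots of unity in $R$ are $\pm 1$, conjugation acts on $\mu_{p^n}$ by inversion, so $\sigma$ acts on $\bQ_p(1)$ by $-1$; Galois-equivariance of the pairing then reads $\langle \sigma x, \sigma y\rangle = -\langle x,y\rangle$. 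For $x,y$ in the same eigenspace $V^{\pm}$ this forces $\langle x, y\rangle = 0$, so $V^+$ and $V^-$ are both isotropic. A non-degenerate symplectic space of dimension $2g$ has isotropic subspaces of dimension at most $g$, and since $\dim V^+ + \dim V^- = 2g$, both must equal $g$. For $p$ odd, $2$ is a unit in $\bZ_p$, so $T$ splits as a direct sum of $\sigma$-eigenlattices $T = T^+ \oplus T^-$ with $\rank T^{\pm} = \dim V^{\pm} = g$. On $T^+/p^n$ every element is fixed, while on $T^-/p^n$ a fixed element satisfies $2x = 0$, hence $x = 0$; thus $(T/p^nT)^{\sigma} = T^+/p^n T^+ \cong (\bZ/p^n\bZ)^g$ and $|A[p^n](R)| = p^{ng}$.

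The main obstacle is the case $p = 2$, where $\sigma$ need not be diagonalizable over $\bZ_2$ and the eigenlattice splitting fails. Here I would invoke the classification of $\bZ_2[\bZ/2\bZ]$-lattices (Diederichsen--Reiner): $T$ is a direct sum of copies of the three indecomposables, namely the trivial lattice $\bZ_2$ (rank $1$), the sign lattice $\bZ_2$ with $\sigma = -1$ (rank $1$), and the regular lattice $\bZ_2[\bZ/2\bZ]$ (rank $2$, with $\sigma$ swapping a basis). Counting $\sigma$-fixed points of $T/2^n$ block by block gives $2^n$, $2$, and $2^n$ respectively, so if $a,b,c$ denote the numbers of blocks of each type then $|A[2^n](R)| = 2^{n(a+c)+b}$. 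Computing dimensions of eigenspaces over $\bQ_2$ shows $\dim V^+ = a + c$, which equals $g$ by the symplectic argument, while $\dim V^- = b + c = g$ gives $b = g - c$; setting $d := b$ one has a lattice invariant independent of $n$ with $0 \le d \le g$.

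This yields $|A[2^n](R)| = 2^{ng+d}$ as claimed, and I expect the delicate points to be the existence and Galois-equivariance of the polarized Weil pairing over $R$ together with the verification that conjugation inverts roots of unity in a general real closed field; the genuinely structural work is concentrated in the symplectic argument of the second paragraph and in importing the integral representation theory of $\bZ/2\bZ$ for $p = 2$.
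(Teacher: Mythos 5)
Your proof is correct, but it takes a genuinely different route from the paper. The paper's argument is model-theoretic: Lemma \ref{family} uses Hilbert schemes and GIT to produce finitely many abelian schemes over finite-type bases so that every polarized abelian variety of fixed dimension and polarization degree occurs as a fiber, which makes the torsion count a first-order sentence in the language of fields; since real closed fields are elementarily equivalent to $\bR$, the statement then transfers from $\bR$, where it follows from the Gross--Harris classification $A(\bR)\cong(\bR/\bZ)^g\times(\bZ/2\bZ)^d$ (Lemma \ref{reals}). You instead work directly over $R$: Galois descent to $C=R[\sqrt{-1}]$ (algebraically closed by Artin--Schreier), the polarized Weil pairing forcing both $\sigma$-eigenspaces of $V=T_pA\otimes_{\bZ_p}\bQ_p$ to be isotropic and hence of dimension exactly $g$, then the eigenlattice splitting for odd $p$ and the Diederichsen--Reiner classification of $\bZ_2[\bZ/2\bZ]$-lattices for $p=2$. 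Each approach buys something: the paper's transfer machinery is reused elsewhere (the corollary over algebraically closed fields of characteristic $0$, and Proposition \ref{deformation}), and it outsources all arithmetic to the classical case $R=\bR$; your proof is self-contained, uniform in $R$ without Tarski's transfer principle or any moduli theory, and yields strictly finer information --- for odd $p$ your splitting $T_p=T^+\oplus T^-$ with both ranks equal to $g$ is exactly the paper's later corollary that $T_p(A)$ is a free $\bZ_p[G]$-module of rank $g$ (since $\bZ_p[G]\cong\bZ_p\times\bZ_p$ when $2$ is invertible), and for $p=2$ you obtain the full $\bZ_2[G]$-lattice structure of $T_2(A)$, not merely the point count. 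The details you flag as delicate are indeed routine: a polarization over $R$ exists because $A/R$ is projective, and conjugation inverts roots of unity because for $\zeta\in\mu_{p^n}(C)$ the element $\zeta\bar\zeta$ is a totally positive root of unity in $R$, hence equals $1$.
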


One might hope to employ the fact that real closed fields are elementarily equivalent to $\bR$ in the sense that the truth in $R$ of any first-order sentence in the language of fields is equivalent to its truth in $\bR$. In the concrete case of elliptic curves, there is a motivating example indicating that this strategy may be possible. 

\begin{example}
\label{example}
Every elliptic curve over a field $K$ of characteristic not equal to 2 or 3 can be realized as the locus in $\bP^2$ of a polynomial $Y^2Z=X^3+AXZ^2+BZ^3$ with $4A^3+27B^2\neq 0$ and with $[0:1:0]$ serving as the identity. These equations describe a family of elliptic curves (i.e. an abelian scheme) $\mathcal{E}\ra\Spec(\bZ[x,y,1/(4x^3+27y^2)])$ that, in effect, allows us to quantify over all elliptic curves in a first-order fashion. Moreover, the condition that a point have order dividing $p^n$ can be expressed by certain polynomial identities with parameters depending uniformly on $A$ and $B$ so that the statement ``every elliptic curve has exactly $p^n$ points of order dividing $p^n$" really is first-order statement in the language of fields. For example, the following first-order statement is equivalent to the statement that every elliptic curve over $K$ has exactly 2 points of order 2: $$\forall a,b (4a^3+27b^2\neq 0 \ra \exists x (x^3+ax+b=0,  \forall z (z^3+az+b=0\rightarrow x=z)))$$
\noindent In general one has to work with projective coordinates, but this is a quotient of $K^n$ by a definable equivalence relation so it is a non-issue.
\end{example}

\noindent This strategy seems more viable once we establish the truth of Proposition \ref{torsion-prop} over $\bR$.

\begin{lemma}
\label{reals}
Proposition \ref{torsion-prop} holds for $R=\mathbb{R}$.
\end{lemma}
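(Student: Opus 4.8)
The plan is to exploit the fact that $A(\bR)$ is not merely an abstract group but a compact commutative real Lie group of dimension $g$. Since $A$ is projective and smooth, $A(\bR)$ is a compact smooth manifold of real dimension $g$ carrying a compatible abelian group law. The classification of compact abelian Lie groups then forces its identity component $A(\bR)^0$ to be a real torus $(\bR/\bZ)^g$ and its component group $\pi_0(A(\bR))$ to be a finite abelian group $F$, with a (non-canonical) splitting $A(\bR)\cong (\bR/\bZ)^g\times F$ because the divisible subgroup $A(\bR)^0$ is a direct summand. The whole lemma then reduces to a torsion count in this product, once we pin down $F$.

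First I would identify $F$ via the complex uniformization. Writing $A(\bC)=V/\Lambda$ with $V=\bC^g$ and $\Lambda\cong\bZ^{2g}$, complex conjugation realizes $G=\Gal(\bC/\bR)=\bZ/2\bZ$ acting compatibly on $V$ and $\Lambda$, and $A(\bR)=A(\bC)^G$. Applying $G$-cohomology to $0\to\Lambda\to V\to A(\bC)\to 0$ and using that $V$ is uniquely divisible (so $H^1(G,V)=0$) yields the exact sequence
$$0\to V^G/\Lambda^G\to A(\bR)\to H^1(G,\Lambda)\to 0.$$
Here the image $V^G/\Lambda^G$ is a closed connected $g$-dimensional subgroup, hence is exactly the identity torus $(\bR/\bZ)^g$, so $F\cong H^1(G,\Lambda)$. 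Since $|G|=2$ annihilates $H^1(G,\Lambda)$, the group $F$ is a finite elementary abelian $2$-group, say $F\cong(\bZ/2\bZ)^d$. To bound $d\leq g$ I would compare $2$-torsion: the structure gives $|A[2](\bR)|=2^{g+d}$, while $A[2](\bR)\subseteq A[2](\bC)\cong(\bZ/2\bZ)^{2g}$ forces $g+d\leq 2g$. (Alternatively this entire paragraph can be replaced by a citation to Comessatti's classical description of $A(\bR)$ for real abelian varieties.)

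With $A(\bR)\cong(\bR/\bZ)^g\times(\bZ/2\bZ)^d$ and $0\leq d\leq g$ in hand, the counts are immediate. For odd $p$ the factor $(\bZ/2\bZ)^d$ has no $p^n$-torsion, while $(\bR/\bZ)[p^n]\cong\bZ/p^n\bZ$, so $|A[p^n](\bR)|=(p^n)^g=p^{ng}$. For $p=2$, each copy of $\bR/\bZ$ contributes $(\bR/\bZ)[2^n]\cong\bZ/2^n\bZ$ of size $2^n$, and for every $n\geq 1$ multiplication by $2^n$ kills all of $(\bZ/2\bZ)^d$, contributing a factor $2^d$; hence $|A[2^n](\bR)|=2^{ng}\cdot 2^d=2^{ng+d}$ with $d$ independent of $n$, as required.

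The only substantive step is the structural input of the second paragraph, specifically that the component group is $2$-torsion of rank at most $g$. Its $2$-torsion nature is forced cleanly by $|\Gal(\bC/\bR)|=2$, so the genuine content is the uniform bound $d\leq g$; I expect this comparison of real and complex $2$-torsion to be the crux, with the remaining computations being entirely formal.
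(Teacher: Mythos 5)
Your proof is correct, and at the top level it follows the same strategy as the paper: both arguments reduce the torsion count to the structure theorem $A(\bR)\cong(\bR/\bZ)^g\times(\bZ/2\bZ)^d$ with $0\leq d\leq g$, after which the counts $p^{ng}$ and $2^{ng+d}$ are formal. The difference is in how that structure theorem is obtained. The paper simply cites \cite{gross-harris} for it (and then splits off the divisible torsion to get the abstract group structure), whereas you prove it from scratch: compactness and the classification of compact abelian Lie groups give $A(\bR)\cong(\bR/\bZ)^g\times F$, the cohomology sequence of $0\to\Lambda\to V\to A(\bC)\to 0$ with $H^1(G,V)=0$ identifies $F\cong H^1(G,\Lambda)$, which is elementary $2$-torsion since it is killed by $|G|=2$, and the comparison $A[2](\bR)\subseteq A[2](\bC)\cong(\bZ/2\bZ)^{2g}$ gives $d\leq g$. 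All of these steps are sound (the one point worth spelling out is that $V^G$ has real dimension $g$ because complex conjugation acts conjugate-linearly on $V$, and that the image of $V^G/\Lambda^G$ is open in $A(\bR)$ --- e.g.\ because $V^G\to A(\bR)$ is the exponential map of the Lie group $A(\bR)$, hence a local diffeomorphism --- so that it is indeed the identity component). What your route buys is self-containedness: the lemma no longer rests on an external reference, and the cohomological description $F\cong H^1(G,\Lambda)$ makes the bound $d\leq g$ and the $2$-torsion nature of the component group transparent. What the paper's route buys is brevity, and the citation carries slightly more information (the isomorphism as real Lie groups, not just abstract groups), though nothing beyond the abstract group structure is needed for the torsion count. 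As you note yourself, your second paragraph could be replaced by a citation to Comessatti, at which point your proof and the paper's coincide.
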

\begin{proof}
In fact, much more is known in this case. It is shown in \cite{gross-harris} that there is an isomorphism of real Lie groups $A(\bR)\cong (\bR/\bZ)^g\times (\bZ/2\bZ)^d$ for some $0\leq d\leq g$. The torsion subgroup $(\bQ/\bZ)^g\subset (\bR/\bZ)^g$ is divisible (or equivalently,  injective), so we get an isomorphism of groups $A(\bR)\cong V\times (\bQ/\bZ)^g\times (\bZ/2\bZ)^d$ where $V$ is a uniquely divisible group (i.e. a $\bQ$-vector space) of rank equal to the cardinality of $\bR$.

%Since $A$ is defined over $\bR$, the group of complex $p^n$-torsion points $A[p^n](\bC)$ comes with an action of complex conjugation. Moreover, since $p$ is odd, there is a decomposition $A[p^n](\bC)\cong A_1\oplus A_{-1}$ where conjugation acts trivially on $A_1$ and by $-1$ on $A_{-1}$. We have $A_1=A[p^n](\bR)$. Now $A(R)$ has the structure of a compact abelian real Lie group of dimension $g$ and is therefore isomorphic (as a real Lie group) to $(S^1)^g\times G$, where $G$ is a finite group. Therefore $\#A_1\geq p^{ng}$. On the other hand, there is an abelian variety $A'/\bR$ whose group of $\bR$-points is naturally isomorphic (AN: am I lying by saying ``naturally"?) to the set of points in $A(\bC)$ for which conjugation acts by $-1$ and under this isomorphism we have $A'[p^n](\bR)\cong A_{-1}$. Indeed, one may construct such an $A'$ by means of Galois descent (consider the action of $-\sigma$ on $A_{\bC}$, where $\sigma$ is the automorphism corresponding to complex conjugation). The same argument applied to $A'$ shows that $\#A_{-1}\geq p^{ng}$. Since $A_1$ and $A_{-1}$ are complementary subgroups in a group of order $p^{2ng}$, we must have equality.
\end{proof}

We now seek a family of abelian varieties akin to that of Example \ref{example} which will allow for a first-order universal quantification over all abelian varieties of a fixed dimension and degree of polarization.

% Maybe my family is not acutally a generic splitting variety for A_{g,d} since I don't think it comes with a polarization that recovers the original the one I started with. It seems like I need to take a "cube root" of O(1) or something, and surely that requires us to extend our field of definition. I'll have to talk with Michael about this to see.
%\begin{definition}
%For a stack $\mathcal{X}$ over a scheme $S$, a \emph{generic splitting variety} of $\mathcal{X}$ is a pair $(X,p)$ of a locally finitely presented $S$-scheme $X$ and a 1-morphism $p:X\ra \mathcal{X}$ over $S$ such that for every field $K$ and every 1-morphism $\zeta:\Spec(K)\ra \mathcal{X}$, there is a morphism $z:\Spec(K)\ra X$ so that $p\circ z$ is 2-equivalent to $\zeta$.
%\end{definition}

\begin{lemma}
\label{family}
For every pair of positive integers $g$ and $d$, there is a finite collection of abelian schemes $A_i\rightarrow S_i$ whose bases $S_i$ are quasi-projective (and therefore finite type) over $\Spec(\bZ)$ that satisfy the following condition: for every field $K$ and abelian variety $B/K$ of dimension $g$ possessing a polarization of degree $d^2$, there is a $K$-point $s:\Spec(K)\rightarrow S_i$ for some $i$ so that the fiber $(A_i)_s/K$ is isomorphic to $B$ as an abelian variety over $K$.
\end{lemma}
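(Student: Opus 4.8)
The plan is to realize every such $B$ as a fiber of the tautological family over an appropriate locally closed subscheme of a single Hilbert scheme over $\Spec(\bZ)$. The essential point — and the only place where genuine care is needed — is to embed $B$ into a \emph{fixed} projective space \emph{over $K$ itself}, with a Hilbert polynomial depending only on $g$ and $d$, so that a single finite-type base suffices and the construction is uniform across all fields $K$. A polarization $\lambda$ of degree $d^2$ is a symmetric isogeny $\lambda\colon B\to B^\vee$ defined over $K$ which over $\bar K$ equals $\phi_L$ for an ample line bundle $L$; the difficulty is that $L$ itself need not descend to $K$, so one cannot directly embed $B$ over $K$ by $|L^{\otimes 3}|$.

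First I would resolve this descent problem using the Poincar\'e bundle. Let $\mathcal{P}$ be the Poincar\'e bundle on $B\times B^\vee$, which is defined over $K$, and set $M:=(\id_B,\lambda)^*\mathcal{P}$. Since $\mathcal{P}$ and $\lambda$ are defined over $K$, so is $M$. Over $\bar K$ one has $\lambda=\phi_L$ and $M\cong L\otimes[-1]^*L$, so $\phi_M=2\lambda$; as $\phi_M$ may be checked after base change, $\phi_M=2\lambda$ holds over $K$. Because ampleness of a line bundle on an abelian variety depends only on its class in $\operatorname{NS}$, and $2\lambda$ is again a polarization, $M$ is ample, and by Lefschetz's theorem $M^{\otimes 3}$ is very ample — and defined over $K$. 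From $\deg\phi_M=\chi(M)^2$ and $\deg(2\lambda)=2^{2g}d^2$ one gets $\chi(M)=2^g d$, hence $\dim H^0(M^{\otimes 3})=\chi(M^{\otimes 3})=6^g d$. Thus $M^{\otimes 3}$ embeds $B$ into $\bP^N_K$ with $N=6^g d-1$ and Hilbert polynomial $P(t)=6^g d\,t^g$, both depending only on $g$ and $d$.

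Next I would pass to the Hilbert scheme $\Hilb:=\Hilb_P(\bP^N_{\bZ})$, which is projective, hence quasi-projective and of finite type, over $\Spec(\bZ)$, together with its universal closed subscheme $\mathcal{Z}\subset\Hilb\times\bP^N$. To equip the fibers with an origin I would work over the base $T:=\mathcal{Z}$, which is again quasi-projective over $\bZ$; the family $\mathcal{X}:=\mathcal{Z}\times_{\Hilb}T\to T$ then carries a tautological section $e$ (the diagonal). By the standard representability results underlying Mumford's construction of the moduli of abelian varieties, the locus $H\subset T$ over which $(\mathcal{X},e)$ is an abelian scheme is locally closed: smoothness and geometric connectedness of the fibers are open conditions, and on that locus a group law with identity $e$ is unique by rigidity and is cut out by a further locally closed condition. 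Hence $H$ is quasi-projective over $\Spec(\bZ)$ and $\mathcal{A}:=\mathcal{X}|_{H}\to H$ is an abelian scheme.

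Finally I would verify the surjectivity onto fibers. Given $B/K$ as in the statement, the embedding of the second step determines a $K$-point of $\Hilb$, and the origin $e_B\in B(K)$ lifts it to a $K$-point $t$ of $T=\mathcal{Z}$; since $\mathcal{X}_t\cong B$ is an abelian variety with identity $e_B$, we have $t\in H(K)$ and $\mathcal{A}_t\cong B$ as abelian varieties over $K$. This already produces a \emph{single} family with the required property, which a fortiori furnishes the finite collection asserted in the lemma. The main obstacle is the embedding step: a naive use of $|L^{\otimes 3}|$ only produces an embedding over $\bar K$, and it is precisely the Poincar\'e-bundle construction that lets one control the field of definition and the Hilbert polynomial simultaneously, which is what makes the uniform, first-order character of the family possible.
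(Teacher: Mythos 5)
Your proposal is correct and follows essentially the same route as the paper: embed $B$ over $K$ via the cube of the pullback $(\id,\lambda)^*\mathcal{P}$ of the Poincar\'e bundle (so that the ambient projective space and Hilbert polynomial depend only on $g$ and $d$), pass to the Hilbert scheme over $\Spec(\bZ)$ with its universal subscheme to encode the section, and invoke Mumford's results (the paper cites \cite[Theorem 6.14]{GIT}) to carve out the locus where the pointed family is an abelian scheme. The only differences are cosmetic: you package the output as a single family over a locally closed locus where the paper keeps a finite collection indexed by irreducible components, and your explicit computation $\chi(M)=2^g d$, $P(t)=6^g d\,t^g$ spells out what the paper delegates to Mumford.
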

\begin{proof} For any fixed rational polynomial $\Phi$, recall that the functor $$\Hilbit(n,\Phi):\Sch^{op}\ra \Set$$ given by 
\begin{align*}
&\Hilbit(n,\Phi)(T)=\{Y\subset \mathbb{P}^n_T:& Y\subset \mathbb{P}^n_T\text { is closed with Hilbert polynomial } \Phi\\ &\text{ and }Y/T\text{ is flat}\}
\end{align*} is represented by a scheme $\Hilb(n,\Phi)$ which is projective over $\Spec(\bZ)$ (see \cite{FGAHilbert}, for instance). Let $Z/\Hilb(n,\Phi)$ be the universal object of this functor. Then $Z$ represents the functor $\Hilbit^*(n,\Phi):\Sch^{op}\rightarrow \Set$ which further specifies a section:
\begin{align*}
&\Hilbit^*(n,\Phi)(T)=\{(Y\subset \mathbb{P}^n_T, s: T\rightarrow Y) : Y\subset \mathbb{P}^n_T\text { is closed with Hilbert} \\&\text{ polynomial } \Phi,Y/T\text{ is flat, and } s\text{ is a section}\}
\end{align*}

\noindent Let $(Z'/Z, \varepsilon: Z\ra Z')$ be the universal object for this functor. Now, given an abelian variety $B/K$ of dimension $g$ possessing a polarization $\phi:B\ra B^{\vee}$ of degree $d^2$, the line bundle $(\id,\phi)^*(\mathcal{P})^3$ (where $\mathcal{P}$ is the Poincar\'e bundle on $B\times B^{\vee}$) induces an embedding $B\hookrightarrow \bP^n_K$ for which $n$ and the Hilbert polynomial $\Phi$ of $B$ are determined entirely by $g$ and $d$ (see \cite[Ch. 3, sec. 16]{mumfordAV}). We thus obtain a morphism $\Spec(K)\rightarrow Z$ (since abelian varieties come with sections). After restricting to the irreducible components of $Z$ and considering the smooth locus of $Z'\ra Z$, \cite[Theorem 6.14]{GIT} ensures that we have a finite collection of abelian schemes with the desired property.
\end{proof}

\begin{pflabel}{Proof of Proposition \ref{torsion-prop}}
It suffices to demonstrate this in the particular case that $A$ further possesses a polarization of degree $d^2$, as every abelian variety possesses such a polarization for some $d$. To verify that a point $P$ on $A$ has order dividing $p^n$ is equivalent to verifying certain polynomial identities, so that this statement being first order is evident from Lemma \ref{family}. Its truth, which can be verified over $\bR$, is evident from Lemma \ref{reals}.
\end{pflabel}

\begin{pflabel}{Proof of Theorem \ref{theorem-structure}}
It follows immediately from Proposition \ref{torsion-prop} and the fundamental theorem of finite abelian groups that $A[p^n](R)\cong (\bZ/p^n\bZ)^g$ and $A[2^n](R)\cong (\bZ/2^n\bZ)^g\times (\bZ/2\bZ)^d$. Since $\bQ/\bZ\cong \oplus_p \bQ_p/\bZ_p\cong \oplus_p \varinjlim \bZ/p^n\bZ$, the torsion subgroup $T$ is of the desired form. As this is a divisible group times a finite group, it follows from \cite[Theorem 8.1]{baer} that $A(R)$ is torsion-split, i.e. $A(R)\cong V\oplus T$, where $V$ is torsion free. We need only to verify that $V$ is uniquely divisible. First note that $2A(R)$ is uniquely divisible. Indeed, it is uniquely $n$-divisible for each $n$, as this is a first-order statement which can be verified over $\bR$. Now given any $v\in V$, $2v\in 2A(R)$ whence $2v/2n\in A(R)$, i.e. there is some $y\in A(R)$ so that $2v=2ny$. Writing $y=w+t$ for $w\in V$ and $t\in T$, we find that $2v=2nw$, and since $V$ is torsion-free we must have $v=nw$. That $V$ is \emph{uniquely} divisible follows since $2A(R)$ is uniquely divisible.
\end{pflabel}

\begin{corollary}
Let $R$ be a real closed field, $p$ an odd prime, and $A/R$ an abelian variety of dimension $g$. Then the Tate module $T_p(A)$ is a free $\bZ_p[G]$-module of rank $g$ where $G=\Gal(R(\sqrt{-1})/R)$.
\end{corollary}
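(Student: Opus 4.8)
The plan is to exploit the fact that, since $R$ is real closed, $\bar R := R(\sqrt{-1})$ is algebraically closed and $G = \Gal(\bar R/R)$ is cyclic of order two, generated by complex conjugation $\sigma$. Because the characteristic is zero and $p$ is invertible, the Tate module $T_p(A) = \varprojlim A[p^n](\bar R)$ is free of rank $2g$ over $\bZ_p$ and carries a $\bZ_p$-linear action of $G$. The entire question is therefore about the $\bZ_p[G]$-module structure on a free $\bZ_p$-module of rank $2g$.

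The crucial simplification comes from $p$ being odd: then $2$ is a unit in $\bZ_p$, so the idempotents $e_{\pm} = (1 \pm \sigma)/2$ lie in $\bZ_p[G]$ and give a ring decomposition $\bZ_p[G] \cong \bZ_p \times \bZ_p$, together with a $G$-equivariant splitting $T_p(A) = e_+ T_p(A) \oplus e_- T_p(A)$ into the $\pm 1$ eigenmodules of $\sigma$. Under this identification, a $\bZ_p[G]$-module $M$ is free of rank $g$ precisely when each of $e_+ M$ and $e_- M$ is free of rank $g$ over $\bZ_p$; since $\bZ_p$ is a discrete valuation ring and each $e_{\pm} T_p(A)$ is a direct summand of a free module, freeness is automatic and only the ranks remain to be computed.

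First I would compute the rank of the $+1$ eigenmodule $e_+ T_p(A) = T_p(A)^G$. Because taking $G$-invariants commutes with the inverse limit and, by Galois descent, $A[p^n](\bar R)^G = A[p^n](R)$, one obtains $T_p(A)^G = \varprojlim A[p^n](R)$. Proposition \ref{torsion-prop} gives $A[p^n](R) \cong (\bZ/p^n\bZ)^g$ for the odd prime $p$, so this eigenmodule has rank $g$ over $\bZ_p$. (Equivalently, reducing modulo $p$, one has $e_+(T_p(A)/pT_p(A)) = A[p](R)$, which has $\bF_p$-dimension $g$.) Since the total rank is $2g$, the $-1$ eigenmodule $e_- T_p(A)$ then has rank $2g - g = g$ as well, and both summands being free of rank $g$ over $\bZ_p$ yields that $T_p(A)$ is free of rank $g$ over $\bZ_p[G]$.

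The computation is essentially forced once Proposition \ref{torsion-prop} is in hand; the only point demanding a little care is the identification $T_p(A)^G = \varprojlim A[p^n](R)$ and the verification that the resulting $\bZ_p$-module is free of the correct rank rather than merely of the correct cardinality at each finite level — a subtlety that the idempotent decomposition over $\bZ_p$ dissolves cleanly, since there freeness and rank can both be read off after reducing modulo $p$.
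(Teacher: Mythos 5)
Your proposal is correct and follows essentially the same route as the paper, whose proof simply asserts that the result ``follows immediately from the description of $A[p^n](R)$'' in Proposition \ref{torsion-prop}; your idempotent decomposition $\bZ_p[G]\cong \bZ_p\times\bZ_p$ (valid since $p$ is odd), the identification $e_+T_p(A)=\varprojlim A[p^n](R)\cong\bZ_p^g$, and the rank count $2g-g=g$ for the minus eigenmodule are exactly the details the paper leaves implicit. In particular, your mod-$p$ reduction argument cleanly handles the one point a careless reader might miss, namely why the invariant summand is free of rank exactly $g$ rather than merely having the right cardinality at each finite level.
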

\begin{proof}
This follows immediately from the description of $A[p^n](R)$ above.
\end{proof}

The following result is well-known, but we give an (ostensibly) new proof using the family of abelian varieties constructed in Lemma \ref{family}.
\begin{corollary}
Let $A/k$ be an abelian variety of dimension $g$ over an algebraically closed field $k$ of characteristic 0. For every positive integer $n$ we have $A[n]\cong (\bZ/n\bZ)^{2g}$.
\end{corollary}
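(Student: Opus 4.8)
The plan is to run the same model-theoretic transfer used in the proof of Proposition \ref{torsion-prop}, now exploiting the completeness of the theory of algebraically closed fields of characteristic $0$ (the Lefschetz principle) in place of the elementary equivalence of real closed fields with $\bR$. Since every abelian variety of dimension $g$ admits a polarization of some degree $d^2$, it suffices to fix $g$ and $d$ and to verify, for the finite collection of families $A_i\to S_i$ furnished by Lemma \ref{family}, that every fiber $(A_i)_s$ has exactly $n^{2g}$ points of order dividing $n$. Exactly as in Example \ref{example} and the proof of Proposition \ref{torsion-prop}, the parameter $s\in S_i(K)$ ranges over a definable set, the group law on the fiber is given by polynomials in the projective coordinates, and the condition $nP=e$ is a conjunction of polynomial identities; hence the assertion ``$|(A_i)_s[n]|=n^{2g}$ for every $s$'' is a single first-order sentence $\sigma_{g,d,n}$ in the language of fields.

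Having isolated $\sigma_{g,d,n}$, I would invoke completeness of $\mathrm{ACF}_0$: the sentence holds in every algebraically closed field of characteristic $0$ as soon as it holds in one, so it is enough to check it over $\bC$. There the analytic uniformization $A(\bC)\cong \bC^g/\Lambda$, with $\Lambda$ a lattice of rank $2g$, gives $A[n](\bC)=\tfrac{1}{n}\Lambda/\Lambda\cong(\bZ/n\bZ)^{2g}$, so in particular the point count is $n^{2g}$ and $\sigma_{g,d,n}$ is true over $\bC$. Running this for each $d$ and using that an arbitrary $A/k$ of dimension $g$ appears as a fiber in the family attached to its own polarization degree, we conclude that $|A[n]|=n^{2g}$ for every abelian variety of dimension $g$ over an algebraically closed field of characteristic $0$.

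To pass from cardinalities to the group structure I would argue as in the proof of Theorem \ref{theorem-structure}: the transferred count applied to every divisor $m\mid n$ gives $|A[m]|=m^{2g}$, and a finite abelian group annihilated by $n$ with $|G[m]|=m^{2g}$ for all $m\mid n$ must be $(\bZ/n\bZ)^{2g}$ (one prime at a time, writing the $p$-part as $\bigoplus_i\bZ/p^{a_i}\bZ$ with $a_i\le v_p(n)$, the counts at $m=p$ and $m=p^{v_p(n)}$ produce $2g$ summands with $\sum_i a_i=2g\,v_p(n)$, forcing each $a_i=v_p(n)$). I expect the genuine work to lie in checking the first-order encoding through Lemma \ref{family} and in the reduction to the polarized case; the transfer and the computation over $\bC$ are standard, which is consistent with the statement being classical. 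The hypothesis of characteristic $0$ is essential precisely because it is $\mathrm{ACF}_0$, not $\mathrm{ACF}_p$, that must be invoked, and the two complete theories return different torsion counts.
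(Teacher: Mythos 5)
Your proposal is correct and follows essentially the same route as the paper: express the torsion count as a first-order sentence via Lemma \ref{family}, transfer by completeness of $\mathrm{ACF}_0$, and verify over $\bC$ by uniformization. The paper states this in two lines; your additional care in passing from the cardinality counts $|A[m]|=m^{2g}$ for $m\mid n$ to the group structure $(\bZ/n\bZ)^{2g}$ fills in a detail the paper leaves implicit, but it is the same argument.
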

\begin{proof}
This statement is first-order given Lemma \ref{family} and can be verified over $\bC$, where it is easily seen to be true.
\end{proof}

Returning briefly to Example \ref{example}, one may notice that the elliptic curves with \emph{connected} real locus occur over the fibers with $4A^3+27B^2>0$, and that those with \emph{disconnected} real locus occur over the fibers with $4A^3+27B^2<0$. We show that this is a more general phenomenon.
\begin{proposition}
\label{deformation}
Let $A\ra S$ be an abelian scheme of dimension $g$ with $S$ of finite type over a real closed field $R$. Then the function $S(R)\ra \bN$ sending $s\in S(R)$ to $|A_s[2](R)|$ is locally constant with respect to the order topology on $S(R)$ inherited from the ordering on $R$. In particular, the isomorphism class of the fiber (as an abelian group) is constant on the connected components of $S(R)$ (in the order topology).
\end{proposition}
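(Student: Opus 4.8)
The plan is to realize the counting function as the fiber-cardinality function of a finite étale cover and then run a covering-space argument in the semialgebraic category. Since $R$ has characteristic $0$ and $S$ is of finite type over $R$, the integer $2$ is invertible on $S$, so $W := A[2]$ is a finite étale group scheme over $S$ of degree $2^{2g}$; write $\pi : W \ra S$ for its structure map. The function under consideration is $f : S(R) \ra \bN$, $f(s) = |A_s[2](R)| = |\pi^{-1}(s) \cap W(R)|$, and it suffices to show that the fiber cardinality of $\pi$ on $R$-points is locally constant in the order topology.

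First I would establish the local homeomorphism property. Fix $s_0 \in S(R)$ and let $w_1,\dots,w_k \in W(R)$ be the finitely many $R$-points of $W$ lying over $s_0$, so that $k = f(s_0)$. Because $\pi$ is étale it is smooth of relative dimension $0$, so in a standard étale presentation its Jacobian is invertible at each $w_i$; the implicit function theorem for real closed fields (\cite[Corollary 2.9.8]{real})---the same tool used in Proposition \ref{prop-freepart}---then yields pairwise disjoint order-open neighborhoods $U_i \ni w_i$ in $W(R)$ together with order-open neighborhoods $V_i \ni s_0$ in $S(R)$ such that $\pi$ restricts to a homeomorphism $U_i \xrightarrow{\sim} V_i$. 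Setting $V' = \bigcap_i V_i$ and shrinking the $U_i$ correspondingly, over every $s \in V'$ each $U_i$ contributes exactly one $R$-point of the fiber, whence $f(s) \geq k$ on $V'$.

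The reverse inequality is the crux, and this is where I expect the main obstacle. I must rule out $R$-points of $W$ in fibers over points near $s_0$ that escape the $U_i$, which is precisely a properness statement. Since $\pi$ is finite it is proper, and I would invoke the semialgebraic counterpart: a finite morphism over a real closed field induces a semialgebraically proper, hence closed, map $W(R) \ra S(R)$ in the order topology (cf. the semialgebraic theory in \cite{real}). Granting this, $C := W(R) \setminus \bigcup_i U_i$ is closed, so $\pi(C)$ is closed and does not contain $s_0$; then $V := V' \setminus \pi(C)$ is an order-open neighborhood of $s_0$ over which every $R$-point of $W$ lies in some $U_i$. Combined with the previous step, $f \equiv k$ on $V$, so $f$ is locally constant. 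Establishing this properness transfer directly---rather than the topological statement over $\bR$, which is not literally a first-order sentence and so cannot simply be transferred---is the one point requiring genuine semialgebraic input.

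Finally I would deduce the claim about isomorphism classes. By Proposition \ref{torsion-prop} (and the proof of Theorem \ref{theorem-structure}) the fiber group satisfies $A_s(R) \cong V_s \times (\bQ/\bZ)^g \times (\bZ/2\bZ)^{d_s}$, where $2^{g+d_s} = |A_s[2](R)| = f(s)$ and $V_s$ is a $\bQ$-vector space of dimension $|R|$ independent of $s$. Hence the isomorphism type of the fiber is determined by the value $f(s)$; since $f$ is an $\bN$-valued locally constant function, it is constant on each connected component of $S(R)$ in the order topology, and therefore so is the isomorphism class of the fiber.
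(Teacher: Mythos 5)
Your proof is correct and follows essentially the same route as the paper's: both exploit that $A[2]\ra S$ is finite \'etale, use a definable implicit/inverse function theorem from \cite{real} to get local homeomorphisms on $R$-points, run the stack-of-records counting argument, and finish by noting that the group structure of $A_s(R)$ is determined by its 2-torsion. The one difference is that where the paper simply appeals to ``the exact same argument as in the case of real manifolds'' (Milnor's argument, which uses compactness of the source), you explicitly supply the semialgebraic properness/closedness of the finite map $A[2](R)\ra S(R)$ that this appeal implicitly requires---a worthwhile clarification, since $A[2](R)$ itself need not be compact.
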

\begin{proof}
We seek to apply a suitable version of the following elementary fact from the theory of real manifolds, whose proof (as in \cite{milnor}) is a simple application of the inverse function theorem: let $f:M\ra N$ be a smooth map between real manifolds of the same dimension with $M$ compact. If $U$ is the (open) subset of $N$ consisting of regular values, then the function $U\rightarrow \bN$ which sends $y\in U$ to $|f\inv(y)|$ is locally constant. 

Now, the 2-torsion subgroup scheme $A[2]$ is \'etale over $S$, as it is the pullback of the \'etale morphism $[2]:A\ra A$. For each $s\in S(R)$, the set $A_s[2](R)$ is finite and non-empty by Theorem \ref{theorem-structure}, so at the level of $R$-points, we have a surjective \'etale morphism $A[2](R)\ra S(R)$ (where an \'etale morphism of algebraic sets is a regular morphism which induces an isomorphism of tangent spaces). A definable version of the inverse function theorem is used in \cite[Proposition 8.1.2]{real} to conclude that $A[2](R)\ra S(R)$ is a local diffeomorphism. The first result now follows from the exact same argument as in the case of real manifolds. The isomorphism class of a real abelian variety (as an abelian group) is completely determined by its 2-torsion subgroup, so the second statement follows.

%Now, for each $s\in S(R)$, the set $A_s[2](R)$ is finite and non-empty by Theorem \ref{theorem-structure}. It follows from Theorem 3.18 of \cite{o-min} that $A[2](R)$ and $S(R)$ have the same dimension as real algebraic sets. Arguing as in Proposition \ref{prop-freepart}, we apply a definable version of the inverse function theorem (as in \cite{real}, Proposition 2.9.7) to conclude just as in the differential case that this function is locally constant on the base (note that \emph{every} point in the base is regular since $A[2]$ is a smooth scheme over $S$). Finally, the isomorphism class of a real abelian variety is completely determined by its 2-torsion subgroup, so the second statement follows.
\end{proof}

\bibliography{thesis_results.bib}
\bibliographystyle{amsplain}

\end{document}